\begin{document}

\title{ A Parameterized Proximal Point Algorithm for Separable Convex Optimization
\thanks{The work was supported by the National Science Foundation of China under
grants 11671318 and 11571178, and the National Science Foundation of USA under grant 1522654.}
}

\author{Jianchao Bai \and Hongchao Zhang \and Jicheng Li
}


\institute{
    Jianchao Bai
    \at School of Mathematics and Statistics, Xi'an Jiaotong University, Xi'an 710049, P.R. China\\
    \email{{\tt bjc1987@163.com}}
\and
    Hongchao Zhang \at Department of Mathematics,
    Louisiana State University, Baton Rouge, LA 70803-4918, USA\\
    \email{{\tt hozhang@math.lsu.edu}}
\and
    Jicheng Li
    \at School of Mathematics and Statistics, Xi'an Jiaotong University, Xi'an 710049, P.R. China\\
    \email{{\tt jcli@mail.xjtu.edu.cn}}
}

\date{Received: date / Accepted: date}

\maketitle

\begin{abstract}
In this paper, we develop a Parameterized Proximal Point Algorithm (P-PPA) for solving a class of separable convex programming problems subject to linear and convex constraints. The proposed algorithm is provable to be globally convergent with a worst-case  $O(1/t)$ convergence rate, where $t$ denotes the iteration number. By properly choosing the algorithm parameters,  numerical experiments on solving a sparse optimization problem arising  from statistical learning show that our P-PPA could perform significantly better than other state-of-the-art methods, such as the Alternating Direction Method of Multipliers (ADMM) and the Relaxed Proximal Point Algorithm (R-PPA).

\keywords{Separable convex programming   \and  Proximal point algorithm \and Global convergence
         \and Statistical learning}
\subclass{  65C60\and 90C25 \and 90C33 }
\end{abstract}

\section{Introduction}
This article aims to study a novel parameterized proximal point method
for solving the following two-block separable convex optimization problem
\begin{equation} \label{Sec1-Prob}
\begin{array}{lll}
\min  &  f(x)+g(y)\\
\textrm{s.t. } &   Ax+By=c,\\
     &   x\in \mathcal{X}, y\in \mathcal{Y},
\end{array}
\end{equation}
where $f(x):\mathcal{R}^{m}\rightarrow\mathcal{R}$ and $\ g(y):\mathcal{R}^{n}\rightarrow\mathcal{R}$
are closed and proper convex functions, but not necessarily smooth; $A\in\mathcal{R}^{l\times m},
B\in\mathcal{R}^{l\times n}$ and $ c\in\mathcal{R}^{l}$ are given  matrices and vectors, respectively;
$\mathcal{X}\subset \mathcal{R}^{m}$ and $\mathcal{Y}\subset \mathcal{R}^{n} $ are closed convex sets.
Throughout the paper, we   assume
the solution set of the problem (\ref{Sec1-Prob}) is nonempty and
the matrices $A$ and $B$ have full column rank.

It is well-known in the literature that proximal point methods are a class of
benchmark methods for solving the problem (\ref{Sec1-Prob}). The Proximal Point
Algorithm (PPA) was originally proposed for solving monotone operator inclusion problems
\cite{Moreau1965,Martinet1970} and then became popularized to   convex programmings
by Rockafellar \cite{Rockafellar1976} and  Eckstein\cite{Eckstein1993}.
As demonstrated
in \cite{Rockafellar1976}, the augmented Lagrangian method \cite{Powell1969} for solving the  problem  (\ref{Sec1-Prob})
is actually an application of PPA to its dual problem. And the recently very popular Alternating Direction Method of Multipliers (ADMM)
can be also regarded as another special variant of PPA to the dual problem \cite{EcksteinBertsekas1992}. Due to its simplicity for implementation, efficiency and strong theoretical
background, the PPA has attracted extensive researches in recent years for solving structured convex optimization problems,
especially by the optimizers from the areas involving lots of structured data,
such as compressed sensing, image processing and machine learning, etc.

There is very rich literature on PPA.  Combettes and  Pennanen \cite{CombettesPennanen2004} showed some conditions  for the viability and  weak convergence of an inexact Relaxed PPA (R-PPA) for finding a common zero of countably many cohypomonotone operators in  Hilbert space. Later, based on the closed-form expressions for the proximity operators \cite{CombettesPesquet2007}, Combettes et al.\cite{CombettesPustelnik2011}  still derived expressions of
new proximity operators in product spaces and  presented an extension of  PPA for solving the  multicomponent signal/image processing
problems. Recently, PPA was extensively studied for  a class of multi-criteria optimization problems with the difference of convex objective functions, whose efficiency was demonstrated by testing a multi-period portfolio minimization problem, see \cite{JiGoh2016} for more details. More recently, by using proximal regularization techniques and  partially parallel splitting schemes, Wang et al.\cite{WangHe2017}  developed a proximal partially parallel splitting method for a multi-objective convex minimization problem. For an extensive review on PPA, one may refer \cite{GuHeYuan2014,HeYuanZhang2013} and the references therein.
In what follows, we simply mention a few works closely related to the development of our proposed method.
He et al.\cite{HeYuanZhang2013}  investigated a customized application of the classical PPA for the convex
programming with linear constraints, where some image processing problems were
tested to show the efficiency of their method. Cai et al.\cite{CaiGuHeYuan2013} also proposed a R-PPA for solving  (\ref{Sec1-Prob}) and analyzed its global convergence with a worst-case linear convergence
rate.
Based on the results of \cite{CaiGuHeYuan2013,GuHeYuan2014,HeYuanZhang2013}, Ma and Ni \cite{MaNi2016}
recently revisited the application of PPA for solving the basis pursuit and  matrix completion problem.
Our new proposed Parameterized PPA (\textbf{P-PPA}) can be actually regarded as more general extensions of the
algorithms developed in \cite{HeYuanZhang2013} and \cite{MaNi2016}  which does not make use of
the separable structure of the objective function in (\ref{Sec1-Prob}).

 Major contributions of this paper are summarized in the following. Firstly, the proximal matrix  in our  proposed
P-PPA is more general and flexible than those in the previous
work \cite{HeYuanZhang2013,MaNi2016}, due to more induced parameters to take consideration of the problem structure  instead of a unique objective function.
Secondly, by properly choosing the algorithm parameters, the new P-PPA could significantly outperform some
state-of-the-art methods, such as ADMM \cite{BoydChu2010} and R-PPA \cite{GuHeYuan2014},
for solving the separable convex optimization, especially when the problem size is large and high accurate solutions are required.

The remaining parts  are organized as follows. In Section 2,  we characterize the solution
of the problem (\ref{Sec1-Prob}) as the solution of proper variational inequalities and review the unified framework of PPA.
In Section 3, we derive the new P-PPA and discuss its global convergence and worst-case convergence rate in an ergodic sense.
At the end of Section 3, we still present a P-PPA with a relaxation step. Some preliminary numerical experiments are
performed in Section 4 for comparing our proposed methods with two benchmark methods.
We finally conclude the paper in Section 5.

\section{Preliminaries}
In this section,  we first introduce some necessary notations used throughout the paper. Then, we characterize the solution of the problem (\ref{Sec1-Prob}) by  the aid of an equivalent variational inequality.
Similar approaches have been widely used in the literature,  e.g. \cite{HeYuanZhang2013,HeMaYuan2016}.

For the sake of convenience,
let $\mathcal{R}$,  $\mathcal{R}^n$ and
 $\mathcal{R}^{n\times m}$   denote the set of real numbers, the set of $n$ dimensional real column vectors
and the set of $n\times m$ real matrices, respectively.
For any $x, y \in \mathcal{R}^n$, $\langle x, y\rangle=x^Ty$ denotes the standard inner product in $\mathcal{R}^n$
and  $\|x\|= \sqrt{\langle x, x\rangle}$ is the Euclidean norm. Given any symmetric positive definite matrix
$G\in \mathcal{R}^{n\times n}$,  the weighted norm $\|x\|_G = \sqrt{\langle x, G x\rangle}$.  In addition,
we use   $\textbf{I}$ and $\textbf{0}$
to  stand for the identity matrix and the zero vector with proper dimension, respectively.

The following basic lemma given in \cite{HeMaYuan2016} will be used as a tool for analyzing the primal-dual solution pair of the problem (\ref{Sec1-Prob}).
\begin{lemma}\label{opt-1}
Let $\varphi:\mathcal{R}^m\rightarrow \mathcal{R}$ and
$\psi:\mathcal{R}^m\rightarrow \mathcal{R}$ be two convex functions defined on a closed convex
set $\Omega\subset \mathcal{R}^m$ and $\psi$ is differentiable. Suppose the solution set
$\Omega^* = \arg\min\limits_{x\in\Omega}\{\varphi(x)+\psi(x) \}$ is nonempty. Then we have
\[
x^* \in \Omega^* \ \mbox{ if and only if } \ x^*\in \Omega, \  \varphi(x)-\varphi(x^*)
+\left\langle x-x^*, \nabla \psi(x^*)\right\rangle\geq 0, \forall x\in\Omega.
\]
\end{lemma}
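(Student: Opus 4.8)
The plan is to prove the two implications of the stated equivalence separately, using only the convexity of $\varphi$ and $\psi$ together with the differentiability of $\psi$. No constraint qualification or subdifferential calculus is needed; the whole argument rests on convex-combination perturbations inside the convex set $\Omega$ and a single limit.

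For the forward implication, I would assume $x^* \in \Omega^*$ and fix an arbitrary $x \in \Omega$. Since $\Omega$ is convex, the point $x^* + t(x - x^*) = (1-t)x^* + t x$ lies in $\Omega$ for every $t \in (0,1]$, and by optimality of $x^*$ the objective value $\varphi + \psi$ cannot be smaller there. The key step is to bound the nonsmooth part from above by convexity, $\varphi(x^* + t(x-x^*)) \le (1-t)\varphi(x^*) + t\varphi(x)$, which isolates a term linear in $t$. Substituting this into the optimality inequality, rearranging, and dividing by $t>0$ yields $\varphi(x^*) - \varphi(x) \le \big[\psi(x^* + t(x-x^*)) - \psi(x^*)\big]/t$. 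Letting $t \to 0^+$, the differentiability of $\psi$ turns the right-hand side into the directional derivative $\langle x - x^*, \nabla\psi(x^*)\rangle$, which is exactly the claimed variational inequality.

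For the backward implication, I would assume the variational inequality holds at $x^* \in \Omega$ and show $\varphi(x^*) + \psi(x^*) \le \varphi(x) + \psi(x)$ for every $x \in \Omega$. Here I would invoke the tangent-plane (gradient) inequality for the differentiable convex function $\psi$, namely $\psi(x) - \psi(x^*) \ge \langle x - x^*, \nabla\psi(x^*)\rangle$, and add it to the assumed inequality $\varphi(x) - \varphi(x^*) \ge -\langle x - x^*, \nabla\psi(x^*)\rangle$. The two gradient terms cancel, leaving $\big[\varphi(x) + \psi(x)\big] - \big[\varphi(x^*) + \psi(x^*)\big] \ge 0$, so $x^* \in \Omega^*$.

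The main obstacle lies in the forward direction, where $\varphi$ is only assumed convex and not differentiable, so its gradient cannot be used directly as one would for $\psi$. The resolution is precisely the convex-combination perturbation above: it absorbs the nonsmooth function $\varphi$ into a bound that is linear in $t$, leaving only the smooth function $\psi$ to be differentiated in the limit $t \to 0^+$. That limit is justified immediately by the assumed differentiability of $\psi$ at $x^*$ along the feasible direction $x - x^*$, so no additional regularity is required.
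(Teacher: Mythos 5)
Your proof is correct. Note that the paper itself does not prove this lemma---it is quoted without proof from \cite{HeMaYuan2016}---and your argument (the convex-combination perturbation plus the limit $t\to 0^+$ for the forward direction, and the gradient inequality for $\psi$ combined with the variational inequality for the converse) is exactly the standard proof given in that reference and in the related He--Yuan literature. Both directions are handled cleanly, and your observation that the convexity bound on $\varphi$ is what lets the nondifferentiable part survive the division by $t$ is precisely the point of the lemma.
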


Now, given any $ \tau\in \mathcal{R}\backslash \{0\}$, that is $\tau\neq 0$, a Lagrangian function
of problem (\ref{Sec1-Prob}) can be written as
\begin{equation} \label{Sec2-Lagra}
L(x,y,\lambda)= f(x)+g(y)-\left\langle\lambda, \tau(Ax+By-c)\right\rangle,
\end{equation}
where $\lambda\in \mathcal{R}^l$ is the Lagrange multiplier.
Then, for any primal-dual solution pair $(x^*,y^*,\lambda^*)$ of  (\ref{Sec1-Prob}),
we have
\[
L(x^*,y^*,\lambda) \leq L(x^*,y^*,\lambda^*)\leq L(x,y,\lambda^*),
\]
which is equivalent to
\[
 \left \{\begin{array}{l}
x^*=\arg\min\{f(x)-\langle\lambda, \tau Ax\rangle| x\in\mathcal{X}\},\\
y^*=\arg\min\{g(y)-\langle\lambda, \tau By\rangle|\ y\in\mathcal{Y}\},\\
\lambda^*=\arg\max\{-\left\langle\lambda, \tau (Ax+By-c)\right\rangle|\ \lambda\in\mathcal{R}^l\}.
\end{array}\right.
\]
By applying  Lemma \ref{opt-1}, the optimality conditions of the above equations are
\begin{equation} \label{Sec2-Lagra-1}
 \left \{\begin{array}{lll}
x^*\in\mathcal{X}, & f(x)-f(x^*)+ \left\langle x-x^*, -\tau A^T\lambda^*\right\rangle\geq 0, &x\in\mathcal{X},\\
y^*\in\mathcal{Y}, &  g(y)-g(y^*)+ \left\langle y-y^*, -\tau B^T\lambda^*\right\rangle\geq 0,&y\in\mathcal{Y},\\
\lambda^*\in\mathcal{R}^l, &  \left\langle \lambda-\lambda^*, \tau (Ax^*+By^*-c)\right\rangle\geq 0,&\lambda\in\mathcal{R}^l,
\end{array}\right.
\end{equation}
which can be rewritten as a variational inequality (\textbf{VI})
\begin{equation} \label{opt-vi-1}
\textrm{VI}(\phi, \mathcal{J}, \mathcal{M}):\quad \phi(u)- \phi(u^*) + \left\langle w-w^*,
\mathcal{J}(w^*)\right\rangle \geq 0,\quad \forall w\in \mathcal{M},
\end{equation}
where
\[
\phi(u)=f(x) +g(y),\quad \mathcal{M}=\mathcal{X}\times \mathcal{Y}\times \mathcal{R}^l,
\]
\[
u=\left(\begin{array}{c}
x\\  y\\
\end{array}\right),\
w=\left(\begin{array}{c}
x\\ y\\  \lambda
\end{array}\right) \ \mbox{and} \
\mathcal{J}(w)=\tau\left(\begin{array}{c}
-A^T\lambda\\
-B^T\lambda\\
 A x + B y - c
\end{array}\right).
\]
Clearly, the solution set of $\textrm{VI}(\phi, \mathcal{J}, \mathcal{M})$, denoted by $\mathcal{M}^*$,
is nonempty by the assumption of nonempty solution set of the problem (\ref{Sec1-Prob}).
Since the affine mapping $\mathcal{J}$ is skew-symmetric, we can obtain
\begin{equation} \label{Sec2-Jw}
\left\langle w-\widehat{w}, \mathcal{J}(w)\right\rangle= \left\langle w-\widehat{w}, \mathcal{J}(\widehat{w})\right\rangle,
\quad \forall\ w, \widehat{w}\in \mathcal{M}.
\end{equation}
Hence, the variational inequality (\ref{opt-vi-1}) is also rewritten as
\begin{equation} \label{opt-vi-2}
\phi(u)- \phi(u^*) + \left\langle w-w^*,
\mathcal{J}(w)\right\rangle \geq 0,\quad \forall w\in \mathcal{M}.
\end{equation}

When the proximal point algorithms are applied to solve the variational inequality
 (\ref{opt-vi-2}) or equivalently (\ref{opt-vi-1}), they often take the following unified approach: at the $k$-th iteration,
find iterate $w^{k+1}$ satisfying
\begin{equation} \label{Sec2-PPA}
\phi(u)- \phi(u^{k+1}) + \left\langle w-w^{k+1}, \mathcal{J}(w^{k+1})+G\left(w^{k+1}-w^k\right)\right\rangle \geq 0,\ \
\forall w\in \mathcal{M},
\end{equation}
 where the above matrix $G\in \mathcal{R}^{(m+n+l)\times (m+n+l)}$ called proximal matrix is a positive definite, and
\[
u^{k+1}=\left(\begin{array}{c}
x^{k+1}\\  y^{k+1}\\
\end{array}\right),\
w^{k+1}=\left(\begin{array}{c}
x^{k+1}\\ y^{k+1}\\  \lambda^{k+1}
\end{array}\right), \
\mathcal{J}(w^{k+1})=\tau\left(\begin{array}{c}
-A^T\lambda^{k+1}\\
-B^T\lambda^{k+1}\\
 A x^{k+1} + B y^{k+1} - c
\end{array}\right).
\]
Obviously, different choices of $G$ would result in different
proximal point algorithms.
We would provide a new choice of $G$ for our proposed P-PPA.

\section{Main results}
In this section, we first develop the P-PPA for solving (\ref{Sec1-Prob}) in detail and discuss its convergence properties.
Then, it is  straightforward to extend the method to the case with a relaxation step.

\subsection{Development of P-PPA with convergence}
Mainly motivated by the  proximal matrix  of PPA  in Eq.(2.5) of \cite{HeYuanZhang2013} and Eq.(3.1) of \cite{MaNi2016}, we would design the matrix $G$ in (\ref{Sec2-PPA}) having the following structure:
\begin{equation} \label{Sec3-G}
G=\left[\begin{array}{cc|c}
\left(\sigma+\frac{\varepsilon^2-1}{s}\right)A^TA&  & -\varepsilon A^T\\
  &  \left(\rho+\frac{\tau^2-1}{s}\right)B^TB &-\tau B^T\\ \hline
-\varepsilon A & -\tau B& s\textbf{I}
\end{array}\right],
\end{equation}
where  $(\sigma, \rho, s,\tau,\varepsilon)$ are parameters satisfying
\begin{equation} \label{Sec3-para-region}
  s>0,\ \sigma >\frac{1}{s},\ (\sigma s-1)(\rho s-1) -\tau^2\varepsilon^2>0, \varepsilon\in
 \mathcal{R} \mbox{ and } \tau\in \mathcal{R}\backslash \{0\}.
\end{equation}
For convenience, let us define
\begin{equation}\label{sigma-rho-bar}
\bar{\sigma} = \sigma+\frac{\tau^2-1}{s} \quad \mbox{ and } \quad
\bar{\rho}=\rho+\frac{\tau^2-1}{s}.
\end{equation}
Then, from later analysis we can see that
 ${1}/{s}$ would play a role of penalty parameter for the equality constraint
 of (\ref{Sec1-Prob}), while
 $\bar{\sigma}$ and $ \bar{\rho}$ can be regarded as the proximal parameters
as those used in the customized PPA \cite{HeYuanZhang2013}.

The following lemma ensures that under proper conditions of the parameters, $G$ is a positive  definite matrix.
\begin{lemma}\label{G-psd}
Suppose that the matrices $A$ and $B$ have full column rank.
For any  $(\sigma, \rho, s,\tau,\varepsilon)$ satisfying (\ref{Sec3-para-region}), the matrix $G$ defined in (\ref{Sec3-G}) is
positive definite.
\end{lemma}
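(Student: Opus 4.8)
The plan is to show directly that $w^{T}Gw>0$ for every nonzero $w=(x^{T},y^{T},\lambda^{T})^{T}$, noting first that $G$ is symmetric by inspection of its block form. I would expand the quadratic form and abbreviate $p=Ax$ and $q=By$, which gives
\[
w^{T}Gw=\left(\sigma+\frac{\varepsilon^{2}-1}{s}\right)\|p\|^{2}+\left(\rho+\frac{\tau^{2}-1}{s}\right)\|q\|^{2}+s\|\lambda\|^{2}-2\varepsilon\langle\lambda,p\rangle-2\tau\langle\lambda,q\rangle.
\]

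The key step is to complete the square in $\lambda$. Since $s>0$, the $\lambda$-terms equal $s\left\|\lambda-(\varepsilon p+\tau q)/s\right\|^{2}-\frac{1}{s}\|\varepsilon p+\tau q\|^{2}$. Expanding $\frac{1}{s}\|\varepsilon p+\tau q\|^{2}=\frac{\varepsilon^{2}}{s}\|p\|^{2}+\frac{2\varepsilon\tau}{s}\langle p,q\rangle+\frac{\tau^{2}}{s}\|q\|^{2}$ and recombining with the diagonal coefficients, the $\varepsilon^{2}/s$ and $\tau^{2}/s$ contributions cancel exactly against the $\varepsilon^{2}$ and $\tau^{2}$ that were built into the $(1,1)$ and $(2,2)$ blocks of $G$, leaving
\[
w^{T}Gw=\left(\sigma-\frac{1}{s}\right)\|p\|^{2}+\left(\rho-\frac{1}{s}\right)\|q\|^{2}-\frac{2\varepsilon\tau}{s}\langle p,q\rangle+s\left\|\lambda-\frac{\varepsilon p+\tau q}{s}\right\|^{2}.
\]
This is the heart of the argument: the particular shifts in $G$ are chosen precisely so that the form reduces to something governed directly by the parameter region (\ref{Sec3-para-region}).

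Next I would estimate the cross term by Cauchy--Schwarz, $-\frac{2\varepsilon\tau}{s}\langle p,q\rangle\ge-\frac{2|\varepsilon\tau|}{s}\|p\|\,\|q\|$, so that the first three terms are bounded below by the scalar quadratic form in $(\|p\|,\|q\|)$ with matrix
\[
M=\left(\begin{array}{cc}\sigma-\frac{1}{s}&-\frac{|\varepsilon\tau|}{s}\\ -\frac{|\varepsilon\tau|}{s}&\rho-\frac{1}{s}\end{array}\right).
\]
By the leading-principal-minor criterion, $M$ is positive definite iff $\sigma-1/s>0$ and $\det M>0$; multiplying the determinant by $s^{2}$ recasts the second condition as $(\sigma s-1)(\rho s-1)-\tau^{2}\varepsilon^{2}>0$. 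Both are exactly the hypotheses in (\ref{Sec3-para-region}), so $M\succ0$ and the first three terms are nonnegative, vanishing only when $p=q=0$.

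Finally I would assemble the pieces: both the $M$-form and $s\|\lambda-(\varepsilon p+\tau q)/s\|^{2}$ are nonnegative, whence $w^{T}Gw\ge0$; and $w^{T}Gw=0$ forces $p=q=0$ together with $\lambda=(\varepsilon p+\tau q)/s=0$. Because $A$ and $B$ have full column rank, $Ax=p=0$ and $By=q=0$ yield $x=0$ and $y=0$, so $w=0$ and $G$ is positive definite. The only mildly delicate point is the Cauchy--Schwarz reduction to $M$, where one must carry the absolute value $|\varepsilon\tau|$ so the bound holds for either sign of $\langle p,q\rangle$; the rest is routine algebra.
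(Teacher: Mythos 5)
Your proof is correct and follows essentially the same route as the paper: completing the square in $\lambda$ is exactly the congruence transformation the paper uses to eliminate the $\lambda$-block (reducing $G_0$ to $\widetilde{G}_0$), and your use of the full column rank of $A$ and $B$ to pass from $p=Ax$, $q=By$ back to $x$, $y$ plays the same role as the paper's factorization $G=D^TG_0D$. The only difference is presentational --- you work at the level of quadratic forms and make the final $2\times 2$ reduction explicit via Cauchy--Schwarz, where the paper argues at the matrix level and leaves that last step implicit.
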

\begin{proof}
Clearly, the matrix $G$ is symmetric and can be decomposed into
\[
G=D^TG_0D,
\]
where
\begin{equation} \label{Sec3-DG0}
D=\left[\begin{array}{ccc}
A&  &  \\
  &  B & \\
  &  & \textbf{I}
\end{array}\right] \ \textrm{and}
\quad G_0=\left[\begin{array}{cc|c}
\left(\sigma+\frac{\varepsilon^2-1}{s}\right)\textbf{I}&  & -\varepsilon \textbf{I}\\
  &  \left(\rho+\frac{\tau^2-1}{s}\right)\textbf{I} &-\tau \textbf{I}\\ \hline
-\varepsilon \textbf{I} & -\tau \textbf{I}& s\textbf{I}
\end{array}\right].
\end{equation}
By the full column rank assumption of $A$ and $B$, the matrix $G$ is positive definite
if and only if $G_0$ is positive definite.
Noting that
\[
\left[\begin{array}{ccc}
\textbf{I} & & \frac{\varepsilon}{s}\textbf{I}\\
& \textbf{I}&\frac{\tau}{s}\textbf{I}\\
& & \textbf{I}
\end{array}\right] \ G_0 \ \left[\begin{array}{ccc}
\textbf{I} & & \frac{\varepsilon}{s}\textbf{I}\\
& \textbf{I}&\frac{\tau}{s}\textbf{I}\\
& & \textbf{I}
\end{array}\right]^T\\
 = \left[\begin{array}{cc|c}
\left(\sigma-\frac{1}{s}\right)\textbf{I} & -\frac{\tau\epsilon}{s}\textbf{I}&  \\
-\frac{\tau\epsilon}{s}\textbf{I}& \left(\rho-\frac{1}{s}\right)\textbf{I}& \\ \hline
& & s\textbf{I}
\end{array}\right] = : \widetilde{G}_0.
\]
Hence, $G_0$ is positive definite if and only if $\widetilde{G}_0$ is positive
definite, which is guaranteed if condition (\ref{Sec3-para-region}) holds.
Therefore, the proof is completed.
  $\  \ \ \diamondsuit$
\end{proof}

In what follows, we develop our  P-PPA in detail. Substituting the matrix $G$ into  (\ref{Sec2-PPA}),
we have  $\lambda^{k+1}\in\mathcal{R}^l$ and
\[
 \langle \lambda-\lambda^{k+1}, R_{\lambda} \rangle \geq 0,\ \forall\lambda\in\mathcal{R}^l ,
\]
where
\begin{eqnarray*}
R_{\lambda} & = & \tau\left( A x^{k+1} + B y^{k+1} - c\right)-\varepsilon A\left(x^{k+1}-x^k\right)
-\tau B\left(y^{k+1}-y^k\right) \\
&& +s\left(\lambda^{k+1}-\lambda^{k}\right).
\end{eqnarray*}
Hence, we have $ R_{\lambda} = 0$ which leads to
\begin{equation} \label{Sec3-labdaupdate}
 \lambda^{k+1}=\lambda^{k}-\frac{1}{s}\left[(\tau-\varepsilon)Ax^{k+1}
+\varepsilon Ax^k+\tau By^k-\tau c\right].
\end{equation}
Meanwhile, by (\ref{Sec2-PPA}) and (\ref{Sec3-labdaupdate}), we also have
\begin{equation} \label{Sec3-x-subp}
x^{k+1}\in\mathcal{X},\quad f(x)-f(x^{k+1})+\left \langle
x-x^{k+1},
R_x\right\rangle\geq 0, \quad \forall x\in\mathcal{X},
\end{equation}
where
\begin{eqnarray*}
R_x&=&  -\tau A^T\lambda^{k+1}-\varepsilon A^T\left(\lambda^{k+1}-\lambda^{k}\right)
+\left(\sigma+\frac{\varepsilon^2-1}{s}\right)A^TA\left(x^{k+1}-x^k\right)  \\
&=& -(\tau+\varepsilon)A^T\left\{\lambda^{k}-\frac{1}{s}\left[(\tau-\varepsilon)Ax^{k+1}
+\varepsilon Ax^k+\tau By^k-\tau c\right]\right\}\\
& & +\varepsilon A^T\lambda^{k}+\left(\sigma+\frac{\varepsilon^2-1}{s}\right)A^TA\left(x^{k+1}-x^k\right)\\
&=&-\tau A^T\bar{\lambda}^{k}+\bar{\sigma}A^TA\left(x^{k+1}-x^k\right)
\end{eqnarray*}
with $\bar{\sigma}$ being defined in (\ref{sigma-rho-bar}) and
\begin{equation} \label{Sec3-x-labmad}
\bar{\lambda}^k=\lambda^{k}-\frac{\tau+\varepsilon}{s}\left(Ax^k+By^k-c\right).
\end{equation}
Notice that by (\ref{Sec3-para-region}), we get $\bar{\sigma} = (\sigma s + \tau^2 -1)/s >0$.
Hence, by Lemma \ref{opt-1}, $x^{k+1}$ is the solution of the following optimization problem
\begin{eqnarray} \label{Sec3-xsubk1}
x^{k+1} &=&\arg\min\limits_{x\in \mathcal{X}}
\left\{
f(x)+ \frac{\bar{\sigma}}{2}\left\|A(x-x^k)\right\|^2
 -\left\langle Ax,\tau \bar{\lambda}^k\right\rangle
\right\} \nonumber \\
&=& \arg\min\limits_{x\in \mathcal{X}}\left\{f(x)+ \frac{\bar{\sigma}}{2}
\left\|A(x-x^k)-\frac{\tau}{\bar{\sigma}}\bar{\lambda}^k \right\|^2\right\}.
\end{eqnarray}
Similarly, we can also derive from (\ref{Sec2-PPA}) and (\ref{Sec3-labdaupdate}) that
\begin{equation} \label{Sec3-y-subp}
y^{k+1}\in \mathcal{Y},\quad g(y)-g(y^{k+1})+\left \langle
y-y^{k+1},
R_y\right\rangle\geq 0, \quad \forall y\in\mathcal{Y},
\end{equation}
where
\begin{eqnarray*}
R_y&=&  -\tau B^T\lambda^{k+1}-\tau B^T\left(\lambda^{k+1}-\lambda^{k}\right)
+\left(\rho+\frac{\tau^2-1}{s}\right)B^TB\left(y^{k+1}-y^k\right)  \\
&=& -2\tau B^T\left\{\lambda^{k}-\frac{1}{s}\left[(\tau-\varepsilon)Ax^{k+1}+\varepsilon Ax^k+\tau By^k-\tau c\right]\right\}\\
& & +\tau B^T\lambda^{k}+\left(\rho+\frac{\tau^2-1}{s}\right)B^TB\left(y^{k+1}-y^k\right) \\
&=&-\tau B^T\bar{\lambda}^{k+\frac{1}{2}}+ \bar{\rho} B^TB\left(y^{k+1}-y^k\right),
\end{eqnarray*}
and
\begin{equation} \label{lambda-half}
\bar{\lambda}^{k+\frac{1}{2}} = \lambda^k-\frac{2}{s}\left[(\tau-\varepsilon)Ax^{k+1}+\varepsilon Ax^k+\tau By^k-\tau c\right].
\end{equation}
Furthermore, it follows from (\ref{Sec3-x-labmad}) and (\ref{lambda-half}) that
\begin{eqnarray} \label{Sec3-y-labmad}
\bar{\lambda}^{k+\frac{1}{2}} & = & \bar{\lambda}^k + \frac{\tau+\varepsilon}{s}\left(Ax^k+By^k-c\right) \nonumber \\
& & -\frac{2}{s}\left[(\tau-\varepsilon)Ax^{k+1}+\varepsilon Ax^k+\tau By^k-\tau c\right] \nonumber \\
& =& \bar{\lambda}^k-\frac{\tau - \varepsilon}{s}\left[A(2x^{k+1}-x^k)+By^k- c\right].
\end{eqnarray}
Hence, by Lemma \ref{opt-1}, $y^{k+1}$ is the solution of the following optimization problem
\begin{eqnarray}\label{Sec3-ysubk1}
y^{k+1} &= &\arg\min\limits_{y\in \mathcal{Y}}
\left\{ g(y)+ \frac{\bar{\rho}}{2} \left\|B(y-y^k)\right\|^2
 -\left\langle By,\tau\bar{\lambda}^{k+\frac{1}{2}}\right\rangle \right\} \nonumber \\
& = & \arg\min\limits_{y\in \mathcal{Y}}\left\{g(y)+ \frac{\bar{\rho}}{2}
\left\|B(y-y^k)-\frac{\tau}{\bar{\rho}}\bar{\lambda}^{k+\frac{1}{2}} \right\|^2\right\}.
\end{eqnarray}
In addition, it follows from (\ref{Sec3-labdaupdate})  and (\ref{Sec3-x-labmad}) that
\begin{eqnarray}\label{Sec3-x-labmad-1}
\bar{\lambda}^{k+1}
&= &\lambda^{k+1}- \frac{\tau+\varepsilon}{s}\left(Ax^{k+1}+By^{k+1}-c\right) \nonumber \\
& =& \lambda^k -\frac{1}{s}\left[(\tau-\varepsilon)Ax^{k+1} +\varepsilon Ax^k+\tau By^k-\tau c\right] \nonumber \\
& & - \frac{\tau+\varepsilon}{s}\left(Ax^{k+1}+By^{k+1}-c\right) \nonumber \\
& =&  \bar{\lambda}^k + \frac{\tau+\varepsilon}{s}\left(Ax^k+By^k-c\right) - \frac{\tau+\varepsilon}{s}\left(Ax^{k+1}+By^{k+1}-c\right) \nonumber \\
&  &  -\frac{1}{s}\left[(\tau-\varepsilon)Ax^{k+1} +\varepsilon Ax^k+\tau By^k-\tau c\right] \nonumber \\
& = & \bar{\lambda}^{k}-\frac{\tau}{s} ( A x^{k+1} + B y^{k+1} -c ) \nonumber \\
&& -\frac{1}{s}\left[ \tau A (x^{k+1} - x^k) + \varepsilon B(y^{k+1} - y^k) \right].
\end{eqnarray}

Summarizing all the above discussions, we propose P-PPA as the following algorithm:

\vskip4mm
\hrule\vskip2mm
\noindent {\bf Algorithm 1} (P-PPA for solving Problem (\ref{Sec1-Prob}))
\vskip1.5mm\hrule\vskip2mm
\noindent 1\ \ Choose parameters $(\sigma, \rho, s,\tau, \varepsilon)$ satisfying (\ref{Sec3-para-region});\\
2\ \ Initialize $(x^0, y^0, \lambda^0)\in \mathcal{R}^m\times \mathcal{R}^n\times\mathcal{R}^l$ and $r^0 = Ax^0+By^0-c$;\\
3\ \ $\bar{\lambda}^0 = \lambda^{0}-\frac{\tau+\varepsilon}{s} r^0$; \\
4\ \ \textbf{For} $k=0,1,\cdots,$ \textbf{do}\\
5\ \ \quad $x^{k+1}=\arg\min\limits_{x\in \mathcal{X}}\left\{f(x)+ \frac{\bar{\sigma}}{2}
\left\|A(x-x^k)-\frac{\tau}{\bar{\sigma}} \bar{\lambda}^{k} \right\|^2\right\}$;\\
6\ \ \quad  $\bar{\lambda}^{k+\frac{1}{2}}=\bar{\lambda}^k-\frac{\tau - \varepsilon}{s}\left[A(2x^{k+1}-x^k)+By^k- c\right]$;\\
7\ \ \quad  $y^{k+1}=\arg\min\limits_{y\in \mathcal{Y}}\left\{g(y)+ \frac{\bar{\rho}}{2}
\left\|B(y-y^k)-\frac{\tau}{\bar{\rho}} \bar{\lambda}^{k+\frac{1}{2}} \right\|^2\right\}$;\\
8\ \ \quad  $r^{k+1} = A x^{k+1} + B y^{k+1} -c$; \\
9\ \ \quad  $\bar{\lambda}^{k+1}=\bar{\lambda}^{k}-\frac{\tau}{s} r^{k+1}
-\frac{1}{s}\left[ \tau A (x^{k+1} - x^k) + \varepsilon B(y^{k+1} - y^k) \right]$.\\
\hrule\vskip4mm

For the above P-PPA, we have the following remarks.
\begin{remark} By taking $\tau=\varepsilon=1,$ the matrix $G$ in (\ref{Sec3-G}) would become
\[
G=\left[\begin{array}{ccc}
\sigma A^TA& & -A^T\\
 &  \rho B^TB &- B^T\\
- A & - B& s\textbf{I}
\end{array}\right],
\]
where the parameters $(\sigma,\rho,s)$ satisfy
\[
s>0,\quad \sigma >\frac{1}{s},\quad (\sigma s-1)(\rho s-1)>1.
\]
In such case,  Algorithm 1 would be reduced to
\[
 \left \{
 \begin{array}{lll}
  \textrm{Let} \ \bar{\lambda}^{0} = \lambda^{0}-\frac{2}{s}\left(Ax^0+By^0-c\right);\\
\textrm{For}\ k=0,1,\cdots, \textrm{do}\\
\quad x^{k+1}=\arg\min\limits_{x\in \mathcal{X}}\left\{f(x)+ \frac{\bar{\sigma}}{2}
\left\|A(x-x^k)-\frac{\tau}{\bar{\sigma}} \bar{\lambda}^{k} \right\|^2\right\};\\
\quad y^{k+1}=\arg\min\limits_{y\in \mathcal{Y}}\left\{g(y)+ \frac{\bar{\rho}}{2}
\left\|B(y-y^k)-\frac{\tau}{\bar{\rho}}\bar{\lambda}^{k} \right\|^2\right\};\\
\quad \bar{\lambda}^{k+1}= \bar{\lambda}^{k} - \frac{1}{s}\left[ A (2 x^{k+1} - x^k) + B (2 y^{k+1} - y^k) -c \right].
\end{array}\right.
\]
This algorithm can be considered as a direct extension of the customized PPA  \cite{HeYuanZhang2013},
which only considers problem with one block structure, to solve the
two-block structured problem (\ref{Sec1-Prob}).
\end{remark}

\begin{remark}  \label{agaga}
The freedom of choosing the parameters $(\sigma,\rho,s,\tau, \varepsilon)$ would allow P-PPA to have more flexibility to select
the proximal parameters $\bar{\sigma}$ and $\bar{\rho}$.
However, note that   they are  proportional to the parameters $(\sigma, \tau,1/s)$ and $(\rho, \tau,1/s)$, respectively.
As commented in the final part of \cite{HeXuYuan2016}, if they are too large, then slow convergence will occur in terms of  solving the  subproblems, which  can significantly affect the
 overall efficiency of the algorithm.
\end{remark}

Next, we discuss the convergence properties of P-PPA in a more general proximal point setting given in (\ref{Sec2-PPA}).
\begin{lemma}\label{Contract}
The sequence $\{w^{k+1}\}$ generated by Algorithm 1 satisfies
\begin{equation} \label{Sec3-IEq}
\|w^{k+1}-w^*\|_G^2\leq \|w^{k}-w^*\|_G^2- \|w^{k+1}-w^k\|_G^2,\quad  \forall w^*\in \mathcal{M}^*.
\end{equation}
\end{lemma}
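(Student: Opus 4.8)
The plan is to derive the contraction estimate (\ref{Sec3-IEq}) by pairing two variational inequalities — the one that characterizes an arbitrary solution $w^*\in\mathcal{M}^*$ and the one that defines the iterate $w^{k+1}$ — and then to convert the resulting cross term $\langle w^{k+1}-w^*, G(w^{k+1}-w^k)\rangle$ into the three squared $G$-norms via a polarization identity. Crucially, I would first invoke Lemma \ref{G-psd}, which guarantees that $G$ is symmetric and positive definite; this is what makes $\|\cdot\|_G$ a genuine norm and legitimizes the identity used at the end.

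First I would substitute the test point $w=w^*$ into the defining relation (\ref{Sec2-PPA}) of $w^{k+1}$, which after using $\langle w^*-w^{k+1},\cdot\rangle=-\langle w^{k+1}-w^*,\cdot\rangle$ and isolating the $G$-term reads
\[
\left\langle w^{k+1}-w^*,\, G(w^{k+1}-w^k)\right\rangle \leq \phi(u^*)-\phi(u^{k+1})-\left\langle w^{k+1}-w^*,\, \mathcal{J}(w^{k+1})\right\rangle .
\]
Next I would apply the optimality inequality (\ref{opt-vi-2}) with the admissible test point $w=w^{k+1}\in\mathcal{M}$, giving $\phi(u^{k+1})-\phi(u^*)+\langle w^{k+1}-w^*, \mathcal{J}(w^{k+1})\rangle\geq 0$, i.e.
\[
\left\langle w^{k+1}-w^*,\, \mathcal{J}(w^{k+1})\right\rangle \geq \phi(u^*)-\phi(u^{k+1}).
\]
Substituting this lower bound into the right-hand side of the previous display, the two $\phi$-contributions and the two $\mathcal{J}$-contributions cancel exactly, and I arrive at the key cross-term inequality
\[
\left\langle w^{k+1}-w^*,\, G(w^{k+1}-w^k)\right\rangle \leq 0 .
\]

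To finish, I would apply the polarization identity valid for any symmetric $G$,
\[
2\left\langle a-b,\, G(a-c)\right\rangle = \|a-b\|_G^2-\|c-b\|_G^2+\|a-c\|_G^2 ,
\]
with $a=w^{k+1}$, $b=w^*$, $c=w^k$. Its left-hand side is exactly twice the cross term just bounded above by zero, so the three squared norms satisfy $\|w^{k+1}-w^*\|_G^2-\|w^{k}-w^*\|_G^2+\|w^{k+1}-w^k\|_G^2\leq 0$, which rearranges to precisely the claimed estimate (\ref{Sec3-IEq}).

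As for where the real content lies: every computation here is elementary, and the only genuine step is the cancellation that produces the cross-term inequality. This hinges on using the two variational inequalities with \emph{opposite} roles for the test points — $w=w^*$ in the subproblem relation (\ref{Sec2-PPA}) and $w=w^{k+1}$ in the optimality relation (\ref{opt-vi-2}) — so that one yields an upper bound and the other a matching lower bound on $\langle w^{k+1}-w^*,\mathcal{J}(w^{k+1})\rangle$. This matching is ultimately the monotonicity of the underlying operator, a consequence of the convexity of $\phi$ together with the skew-symmetry (\ref{Sec2-Jw}) of $\mathcal{J}$. The hard part will therefore be purely bookkeeping: keeping the signs and the two test-point substitutions straight so that the $\phi$- and $\mathcal{J}$-terms cancel rather than accumulate; once the cross-term inequality is secured, the conclusion is a one-line consequence of the polarization identity.
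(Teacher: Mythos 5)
Your proof is correct and follows essentially the same route as the paper: set $w=w^*$ in (\ref{Sec2-PPA}), combine with the optimality inequality (\ref{opt-vi-2}) at $w=w^{k+1}$ (i.e.\ monotonicity via convexity of $\phi$ plus the skew-symmetry (\ref{Sec2-Jw})) to obtain $\langle w^{k+1}-w^*, G(w^{k+1}-w^k)\rangle\le 0$, and then convert the cross term into the three squared $G$-norms. The paper does the last step by expanding $\|w^k-w^*\|_G^2=\|(w^k-w^{k+1})+(w^{k+1}-w^*)\|_G^2$ rather than quoting the polarization identity, but that is the identical computation.
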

\begin{proof} Setting $w=w^*$ in (\ref{Sec2-PPA}), we have
\[
\phi(u^*)- \phi(u^{k+1}) + \left\langle w^*-w^{k+1}, \mathcal{J}(w^{k+1})+G\left(w^{k+1}-w^k\right)\right\rangle \geq 0,
\]
which, by (\ref{opt-vi-1})-(\ref{Sec2-Jw}), implies
\[
\left\langle w^*-w^{k+1},G\left(w^{k+1}-w^k\right)\right\rangle \geq 0.
\]
By the above inequality, we obtain
\[
\begin{array}{lll}
\|w^k-w^*\|_G^2&=&\|w^k-w^{k+1}+ w^{k+1}-w^*\|_G^2\\
&\geq &\|w^k-w^{k+1}\|_G^2+\|w^{k+1}-w^*\|_G^2,
\end{array}
\]
which immediately gives the inequality (\ref{Sec3-IEq}).  $\  \ \ \diamondsuit$
\end{proof}

Based on the above lemma, we have the following global convergence theorem.
\begin{theorem}\label{global-converge}
Suppose that the condition (\ref{Sec3-para-region}) holds and the sequence $\{w^{k+1}\}$ is generated by  Algorithm 1.
Then, there exists a $w^\infty \in \mathcal{M}^*$ such that
\begin{equation} \label{point-converge}
\lim_{k \to \infty} w^k = w^\infty.
\end{equation}
\end{theorem}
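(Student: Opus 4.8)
The plan is to exploit the contraction inequality of Lemma~\ref{Contract} within the classical Fej\'er-monotone framework. First I would fix an arbitrary $w^* \in \mathcal{M}^*$ and observe that (\ref{Sec3-IEq}) shows the scalar sequence $\{\|w^k - w^*\|_G\}$ is monotonically non-increasing and bounded below by zero, hence convergent; in particular $\{w^k\}$ is bounded, because $G$ is positive definite by Lemma~\ref{G-psd} and so $\|\cdot\|_G$ is equivalent to the Euclidean norm. Summing (\ref{Sec3-IEq}) over $k = 0, 1, \ldots$ telescopes the right-hand side and yields $\sum_{k=0}^\infty \|w^{k+1}-w^k\|_G^2 \le \|w^0 - w^*\|_G^2 < \infty$, which forces $\|w^{k+1}-w^k\|_G \to 0$ and therefore $w^{k+1}-w^k \to 0$.

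Next, since $\{w^k\}$ is bounded, the Bolzano--Weierstrass theorem supplies a subsequence $\{w^{k_j}\}$ converging to some limit point $w^\infty$. I would then show $w^\infty \in \mathcal{M}^*$ by passing to the limit in the defining inequality (\ref{Sec2-PPA}) along this subsequence. The proximal term $G(w^{k_j+1}-w^{k_j})$ vanishes in the limit because $w^{k+1}-w^k \to 0$, the affine mapping $\mathcal{J}$ is continuous, the closedness of $\mathcal{M}$ guarantees $w^\infty \in \mathcal{M}$, and the lower semicontinuity of $\phi$ (which follows from $f$ and $g$ being closed proper convex) handles the nonsmooth objective term. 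Together these recover the variational inequality (\ref{opt-vi-1}), certifying that $w^\infty$ solves $\mathrm{VI}(\phi,\mathcal{J},\mathcal{M})$, i.e. $w^\infty \in \mathcal{M}^*$.

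Finally, I would upgrade subsequential convergence to convergence of the full sequence by the standard Fej\'er argument: because $w^\infty \in \mathcal{M}^*$, Lemma~\ref{Contract} applies with the specific choice $w^* = w^\infty$, so $\{\|w^k - w^\infty\|_G\}$ is convergent. Since the subsequence $\{w^{k_j}\}$ already drives this quantity to zero, its limit must equal zero, whence $\|w^k - w^\infty\|_G \to 0$ and (\ref{point-converge}) follows.

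The step I expect to be the main obstacle is establishing that the limit point lies in $\mathcal{M}^*$. The limiting argument in (\ref{Sec2-PPA}) must be carried out with care, invoking lower semicontinuity rather than continuity of $\phi$ to accommodate the possibly nonsmooth $f$ and $g$, and one must verify that passing to the limit in the inner-product term is legitimate despite the index shift between $w^{k+1}$ and $w^k$, which is resolved precisely by $w^{k+1}-w^k \to 0$. The remaining ingredients are the routine telescoping sum and the monotonicity of $\{\|w^k-w^*\|_G\}$.
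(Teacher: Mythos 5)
Your proposal is correct and follows essentially the same route as the paper: Fej\'er monotonicity of $\{\|w^k-w^*\|_G\}$ from Lemma~\ref{Contract}, boundedness and $w^{k+1}-w^k\to 0$ via the telescoping sum (using positive definiteness of $G$ from Lemma~\ref{G-psd}), passage to the limit in (\ref{Sec2-PPA}) along a convergent subsequence to place the accumulation point in $\mathcal{M}^*$, and the standard Fej\'er upgrade to full-sequence convergence. The only difference is that you spell out the lower-semicontinuity argument for the limit step (correctly oriented, since the inequality bounds $\phi(u^{k+1})$ from above) where the paper leaves it implicit.
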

\begin{proof}
Since condition (\ref{Sec3-para-region}) holds, we have by Lemma \ref{G-psd} that $G$ is positive definite.
Then, it follows from (\ref{Sec3-IEq}) that $\{w^{k}\}$ is bounded and
\begin{equation} \label{diff-zero}
\lim_{k\to \infty} \| w^k-w^{k+1}\|=0.
\end{equation}
Let $w^\infty$ be any accumulation point of $\{w^{k}\}$. By taking a subsequence of $w^k$ in (\ref{Sec2-PPA}) if necessary,
it follows from (\ref{diff-zero}) that
\[
\phi(u)-\phi(u^\infty) + \left\langle w-w^\infty, \mathcal{J}(w^\infty)\right\rangle
\geq 0,\ \forall w\in \mathcal{M}.
\]
Hence, $w^\infty \in \mathcal{M}^*$. So, by (\ref{Sec3-IEq}) again, we have
\[
\|w^k-w^\infty\|_G \leq \|w^j-w^\infty\|_G \quad \mbox{ for all } k \ge j.
\]
Then, it follows from  $w^\infty$ being an accumulation point that (\ref{point-converge}) holds.
 $\ \ \ \diamondsuit$
\end{proof}

Now, we establish the  worst-case $\mathcal{O}(1/t)$ ergodic convergence rate
for solving the variational inequality (\ref{opt-vi-2}). Let
\begin{equation}\label{erg-iterate}
\textbf{w}_t :=\frac{1}{t+1}\sum_{k=0}^{t}w^{k+1}\quad \mbox{and} \quad \textbf{u}_t :=\frac{1}{t+1}\sum_{k=0}^{t}u^{k+1}.
\end{equation}

\begin{theorem}\label{conv-rate}
Suppose that the condition (\ref{Sec3-para-region}) holds and the sequence $\{w^{k+1}\}$ is generated by  Algorithm 1.
Then, we have
\begin{equation} \label{erg-rate}
\phi(\textbf{u}_t)-\phi(u)+ \left\langle \textbf{w}_t-w, \mathcal{J}(w)\right\rangle \leq \frac{1}{2(t+1)}\left\|w^0-w\right\|_G^2,
\ \forall\ w\in \mathcal{M}.
\end{equation}
\end{theorem}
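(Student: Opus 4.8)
The plan is to run the standard ergodic-rate machinery for proximal-point schemes, starting from the defining inequality (\ref{Sec2-PPA}) that every iterate $w^{k+1}$ satisfies. The crucial first move is to exploit the skew-symmetry of the affine map $\mathcal{J}$. Using (\ref{Sec2-Jw}) with $\widehat{w}=w^{k+1}$, I would replace $\mathcal{J}(w^{k+1})$ by $\mathcal{J}(w)$ in the inner product, turning (\ref{Sec2-PPA}) into
\[
\phi(u^{k+1})-\phi(u)+\left\langle w^{k+1}-w,\mathcal{J}(w)\right\rangle
\le \left\langle w-w^{k+1}, G\left(w^{k+1}-w^k\right)\right\rangle,\quad \forall w\in\mathcal{M}.
\]
This is the key structural step: because the right-hand functional is now $\mathcal{J}(w)$ rather than $\mathcal{J}(w^{k+1})$, the inner-product term on the left depends on $w^{k+1}$ only linearly, so after summation it will collapse into a single term involving the average $\textbf{w}_t$.

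Next I would handle the cross term on the right by the polarization identity for the $G$-weighted inner product, namely
\[
2\left\langle w-w^{k+1}, G\left(w^{k+1}-w^k\right)\right\rangle
=\|w-w^k\|_G^2-\|w-w^{k+1}\|_G^2-\|w^{k+1}-w^k\|_G^2,
\]
which is valid since $G$ is symmetric positive definite by Lemma \ref{G-psd}. Discarding the nonpositive term $-\|w^{k+1}-w^k\|_G^2$ yields the per-iteration bound
\[
\phi(u^{k+1})-\phi(u)+\left\langle w^{k+1}-w,\mathcal{J}(w)\right\rangle
\le \tfrac{1}{2}\left(\|w-w^k\|_G^2-\|w-w^{k+1}\|_G^2\right).
\]
Summing this over $k=0,1,\dots,t$, the right-hand side telescopes to $\tfrac{1}{2}\big(\|w-w^0\|_G^2-\|w-w^{t+1}\|_G^2\big)\le \tfrac{1}{2}\|w^0-w\|_G^2$, while the left-hand side, after dividing by $t+1$ and recalling the definitions in (\ref{erg-iterate}), becomes $\frac{1}{t+1}\sum_{k=0}^{t}\phi(u^{k+1})-\phi(u)+\langle \textbf{w}_t-w,\mathcal{J}(w)\rangle$.

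The final step is to replace the averaged objective by $\phi(\textbf{u}_t)$. Since $f$ and $g$ are convex, $\phi(u)=f(x)+g(y)$ is convex, and since each $u^{k+1}\in\mathcal{X}\times\mathcal{Y}$ with $\mathcal{X},\mathcal{Y}$ convex, the average $\textbf{u}_t$ lies in the same set; Jensen's inequality then gives $\phi(\textbf{u}_t)\le \frac{1}{t+1}\sum_{k=0}^{t}\phi(u^{k+1})$. Substituting this lower bound produces exactly (\ref{erg-rate}). I expect the main obstacle to be conceptual rather than computational: recognizing that the skew-symmetry substitution is what makes the inner-product terms linear in $w^{k+1}$ (so that only the average $\textbf{w}_t$ survives), and pairing it with the convexity/Jensen step that collapses the averaged objectives into $\phi(\textbf{u}_t)$. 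The polarization identity and the telescoping are routine once $G\succ 0$ is in hand.
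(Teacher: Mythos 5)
Your proposal is correct and follows essentially the same route as the paper's proof: the skew-symmetry substitution via (\ref{Sec2-Jw}), the $G$-weighted polarization identity (the paper states it as $2\langle a-b, G(a-c)\rangle=\|a-c\|_G^2+\|a-b\|_G^2-\|c-b\|_G^2$ with $a=w^{k+1}$, $b=w$, $c=w^k$, which is your identity with signs rearranged), dropping the nonpositive term, telescoping the sum, and finishing with Jensen's inequality applied to the convex $\phi$. No gaps.
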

\begin{proof}
By (\ref{Sec2-PPA}) and the property (\ref{Sec2-Jw}), it holds that
\begin{equation}\label{Sec2-FianlVI}
\phi(u)-\phi(u^{k+1})+ \left\langle w-w^{k+1}, \mathcal{J}(w)\right\rangle \geq \left\langle w^{k+1}-w, G(w^{k+1}-w^k)\right\rangle,
\forall\ w\in \mathcal{M}.
\end{equation}
Then, applying the identity
\[
2\langle a-b, G(a-c)\rangle=\|a-c\|_G^2+\|a-b\|_G^2- \|c-b\|_G^2
\]
with
\[
a=w^{k+1},\quad b=w,\quad c=w^k,
\]
we can obtain
\[
 \begin{array}{lll}
\left\langle w^{k+1}-w, G(w^{k+1}-w^k)\right\rangle&=&\frac{1}{2}\left( \left\|w^{k+1}-w^k\right\|_G^2+ \left\|w^{k+1}-w\right\|_G^2
- \left\|w^{k}-w\right\|_G^2\right)\\
&\geq&\frac{1}{2}\left( \left\|w^{k+1}-w\right\|_G^2- \left\|w^{k}-w\right\|_G^2\right),
\end{array}
\]
which together with (\ref{Sec2-FianlVI}) imply
\[
\phi(u)-\phi(u^{k+1})+ \left\langle w-w^{k+1}, \mathcal{J}(w)\right\rangle +\frac{1}{2}\left\|w^{k}-w\right\|_G^2
\geq \frac{1}{2}\left\|w^{k+1}-w\right\|_G^2.
\]
Summing the above inequality over $k=0,1,\cdots,t$, we get
\[
(t+1)\phi(u)-\sum\limits_{k=0}^{t}\phi(u^{k+1})+ \left\langle (t+1)w-\sum\limits_{k=0}^{t}w^{k+1}, \mathcal{J}(w)\right\rangle
+\frac{1}{2}\left\|w^{0}-w\right\|_G^2 \geq 0,
\]
which by the definition of $\textbf{w}_t$ in (\ref{erg-iterate}) gives
\begin{equation}\label{Sec2-FianlEq}
\frac{1}{t+1}\sum\limits_{k=0}^{t}\phi(u^{k+1})-\phi(u)+ \left\langle \textbf{w}_t-w, \mathcal{J}(w)\right\rangle
\leq\frac{1}{2(t+1)}\left\|w^{0}-w\right\|_G^2.
\end{equation}
By the convexity of the function $\phi(u)$ and the definition of $\textbf{u}_t$ in (\ref{erg-iterate}), we have
\[
\phi(\textbf{u}_t)\leq \frac{1}{t+1}\sum_{k=0}^{t}\phi(u^{k+1}).
\]
Then, (\ref{erg-rate}) is true by
substituting the above inequality into (\ref{Sec2-FianlEq}).  $\ \ \ \diamondsuit$
\end{proof}

\subsection{P-PPA with a relaxation step}

Applying a relaxation step for PPA is a standard technique to accelerate its convergence
\cite{GuHeYuan2014,Golsshtein1979,HeYuanZhang2013}.
Combining the PPA approach  (\ref{Sec2-PPA}) together with a relaxation step would give the following procedure:
 at the $k$-th iteration, find $\widetilde{w}^{k+1}$ satisfying
\begin{equation} \label{relax-PPA}
\phi(u)- \phi(\widetilde{u}^{k+1}) + \left\langle w- \widetilde{w}^{k+1}, \mathcal{J}(\widetilde{w}^{k+1})+G\left(\widetilde{w}^{k+1}-w^k\right)\right\rangle \geq 0,\ \
\forall w\in \mathcal{M},
\end{equation}
and then let
\[
 w^{k+1} = w^k + \gamma (\widetilde{w}^{k+1} - w^k),
\]
where $\gamma \in (0,2)$ is the relaxation parameter. When $\gamma =1$, the above relaxed PPA
will be reduced to the standard PPA. For the relaxed PPA, analogous to (\ref{Sec3-IEq}), it is not difficult (for details, see \cite{GuHeYuan2014,Golsshtein1979})
to show
\begin{equation}\label{relax-ine}
\|w^{k+1}-w^*\|_G^2\leq \|w^{k}-w^*\|_G^2- \gamma(\gamma-2) \|\tilde{w}^{k+1}-w^k\|_G^2,\quad  \forall w^*\in \mathcal{M}^*.
\end{equation}
Then, based on (\ref{relax-ine}), it is straightforward to have global convergence and convergence rate analogous to Theorem \ref{global-converge}
and Theorem \ref{conv-rate}.
More precisely, combining Algorithm 1 with a relaxation step would give the following Relaxed P-PPA (RP-PPA).
\vskip4mm
\hrule\vskip2mm
\noindent {\bf Algorithm 2} (RP-PPA for solving Problem (\ref{Sec1-Prob}))
\vskip1.5mm\hrule\vskip2mm
\noindent 1\ \ Choose parameters $(\sigma, \rho, s,\tau, \varepsilon)$ satisfying (\ref{Sec3-para-region}) and $\gamma\in(0,2)$;\\
2\ \ Initialize $(x^0, y^0, \lambda^0)\in \mathcal{R}^m\times \mathcal{R}^n\times\mathcal{R}^l$ and $r^0 = Ax^0+By^0-c$;\\
3\ \ $\bar{\lambda}^0 = \lambda^{0}-\frac{\tau+\varepsilon}{s} r^0$; \\
4\ \ \textbf{For} $k=0,1,\cdots,$ \textbf{do}\\
5\ \ \quad $\widetilde{x}^k=\arg\min\limits_{x\in \mathcal{X}}\left\{f(x)+ \frac{\bar{\sigma}}{2}
\left\|A(x-x^k)-\frac{\tau}{\bar{\sigma}} \bar{\lambda}^{k} \right\|^2\right\}$;\\
6\ \ \quad  $\bar{\lambda}^{k+\frac{1}{2}}=\bar{\lambda}^k-\frac{\tau - \varepsilon}{s}\left[A(2\widetilde{x}^k-x^k)+By^k- c\right]$;\\
7\ \ \quad  $\widetilde{y}^k=\arg\min\limits_{y\in \mathcal{Y}}\left\{g(y)+ \frac{\bar{\rho}}{2}
\left\|B(y-y^k)-\frac{\tau}{\bar{\rho}} \bar{\lambda}^{k+\frac{1}{2}} \right\|^2\right\}$;\\
8\ \ \quad  $r^{k+1} = A \widetilde{x}^k + B \widetilde{y}^k -c;\ \Delta x^k = \widetilde{x}^k - x^k;\ \Delta y^k = \widetilde{y}^k -  y^{k}$;  \\
9\ \ \quad  $\widetilde{\lambda}^k=\bar{\lambda}^{k}-\frac{\tau}{s} r^{k+1}
-\frac{1}{s}\left[ \tau A \Delta x^k + \varepsilon B\Delta y^k \right]$;\\
10 \ \  Relaxation step: compute
\[
\left(\begin{array}{c}
x^{k+1}\\  y^{k+1}\\
\end{array}\right)=
\left(\begin{array}{c}
x^{k}\\  y^{k}\\
\end{array}\right) + \gamma\left(\begin{array}{c}
\Delta x^k \\ \Delta y^k\\
\end{array}\right),
\]
\ \ \ and
\[
 \bar{\lambda}^{k+1} =  \bar{\lambda}^{k} + \gamma (\widetilde{\lambda}^k - \bar{\lambda}^{k})
- \frac{(1-\gamma) (\tau+\varepsilon)}{s} (A \Delta x^k + B \Delta y^k).
\]
\hrule\vskip4mm

\section{Numerical experiments}
In this section, we would  perform some numerical
experiments for solving the following \emph{lasso} model problem arising from statistical learning
\cite{Tibshirani1996}:
\begin{equation}\label{lasso-1}
\min\limits_{x\in \mathcal{R}^n}\nu\|x\|_1+\frac{1}{2}\|Dx-b\|^2,
\end{equation}
where $\|x\|_1=\sum_{i=1}^{n}|x_i|$; $\nu>0$ is a scalar regularization parameter;
$b\in \mathcal{R}^l$ is a response vector; $D\in \mathcal{R}^{l\times n}$ is a design matrix
with $l$ and $n$ denoting the number of data points and  the number of features,
respectively. In typical applications, there are usually many more features than training examples \cite{BoydChu2010},
that is $l \ll n$, and the goal is to find a parsimonious model for the data. We refer the readers to
\cite{ChenDSa2001,Tibshirani1996,BoydChu2010,Hastie2009} for more backgrounds on the \emph{lasso} model. In the numerical experiments,
all   algorithms are coded in MATLAB 7.10(R2010a) and run
on a PC with Intel Core i5 processor (3.3GHz) with 4 GB memory.

By introducing an auxiliary variable $y\in \mathcal{R}^n$,  the problem
(\ref{lasso-1}) is obviously equivalent to
\begin{equation}\label{test-P}
\begin{array}{lll}
\min  &  \nu\|x\|_1+\frac{1}{2}\|Dy-b\|^2\\
\textrm{s.t. } &  x-y= \textbf{0},\\
     &   x\in \mathcal{R}^n,\ y\in \mathcal{R}^n,
\end{array}
\end{equation}
which is a special case of (\ref{Sec1-Prob}) with
\[
f(x)=\nu\|x\|_1,\ g(y)=\frac{1}{2}\|Dy-b\|^2,\ A=\textbf{I},\ B=-\textbf{I},\ c=\textbf{0}.
\]
Applying Algorithm 1 to solve (\ref{test-P}), we have
\[
x^{k+1}=\arg\min\limits_{x\in\mathcal{R}^n}\left\{\nu\|x\|_1
+\frac{\bar{\sigma}}{2}\left\|x-x^k-\frac{\tau}{ \bar{\sigma}}\bar{\lambda}^{k}\right\|^2\right\},
\]
for which $x^{k+1}$ can be explicitly obtained by a soft shrinkage operator
\cite{{Donoho Tsaig2008}} and $\bar{\sigma} $ is defined in (\ref{sigma-rho-bar}).
In addition, we can deduce that
\[
y^{k+1}=\left[D^TD+ \bar{\rho}\textbf{I}\right]^{-1}
\left[ D^Tb+\bar{\rho}\left(y^k-\frac{\tau}{ \bar{\rho}} \bar{\lambda}^{k+\frac{1}{2}} \right)\right],
\]
where $\bar{\rho}$ is defined in (\ref{sigma-rho-bar}).
Notice that the matrix $ D^TD+\bar{\rho}\textbf{I}$ is positive definite,
since $\bar{\rho}=(\rho s+\tau^2-1)/s >0$. Though it maybe quite time consuming
to compute $(D^TD+\bar{\rho}\textbf{I})^{-1}$ and $D^Tb$ when the problem scale
is large, they only need to be computed once before the iteration starts.
To save computation, we can actually compute once and cache the Cholesky Factorization of
the much smaller matrix $DD^T/\bar{\rho}+\textbf{I}$ (note $l\ll n$), which
 takes about $(l^2n+l^3/3)$ flops, including the cost of forming $DD^T$
and the Cholesky Factorization. Then, all the subsequent $y$-updates can
be calculated by the Sherman-Morrison inversion matrix formula together with forward-backward substitutions.

The problem data are generated by the following way.
Each entry of the feature matrix $D$ is draw from the standard normal distribution $\mathcal{N}(0,1)$ and then
each of its column  is normalized.  A random sparse vector $x^{true}\in \mathcal{R}^n$ with
100 nonzero entries is also drawn from an $\mathcal{N}(0,1)$ distribution. The vector $b$ is computed via
$b=Dx^{true}+\epsilon$, where $\epsilon\sim \mathcal{N}(0,10^{-3}\textbf{I})$, and the regularization
parameter is set as $\nu=0.12\nu_{max}$ with $\nu_{max}=\|D^Tb\|_\infty$.
The following stopping criterion is used for all the comparison algorithms
\[
\textrm{IRE(k)}:=\frac{\left\|x^k-y^k\right\|}{\max\{\left\|x^k\right\|,\left\|y^k\right\|\}}
\leq \textrm{Tol}\quad \mbox{and} \quad \frac{\phi(u^k)-\phi^*}{\phi^*}\leq 1.0\times 10^{-8},
\]
where \textrm{Tol} is a given tolerance, $\phi(u^k)=f(x^k)+g(y^k)$, and  $\phi^*$ is the approximate
optimal objective function value obtained by running P-PPA after $2000$ iterations.
Then, all the comparison algorithms are set to have the maximum $2000$ number of iterations and
they all use the same starting point  $(x^0,y^0,\lambda^0)=(\textbf{0},\textbf{0},\textbf{0})$.

\subsection{Effects of parameters $(\sigma,\rho,s,\tau,\varepsilon)$}
The aim of this subsection is to investigate   how the five parameters $(\sigma,\rho,s,\tau,\varepsilon)$ would influence the performance of P-PPA. For this purpose, we first fix the free parameters $(\tau,\varepsilon)$ as $(3, 1.5)$ and then change other distinctively constrained parameters $(\sigma,\rho,s)$ to
  investigate their effects  on  P-PPA.

Table 1 presents the numerical results of Algorithm 1 (i.e. P-PPA) with different parameters for solving
the test problem (\ref{test-P}) with dimension $(l,n)=(1800,4000)$. For this set of tests, we fix
the tolerance $\textrm{Tol}=1.0\times10^{-6}$. And in all the numerical Tables, ``Iter", ``CPU" and
``DRN"  denote the   iteration numbers, the CPU time in seconds and  the dual residual
norm $\|y^{k}-y^{k-1}\|$ , respectively.
We can  observe from Table 1 that:
\begin{itemize}
\item For  the parameters $(\sigma,\rho,s)$, the reported results in each column of IRE, DRN and $\phi(u^k)$ are nearly the same when fixed any two parameters with one parameter changing.
\item With the increase of the parameter $\sigma$ or $\rho$, both the
iteration number and the CPU time tend to increase(denoted by $\downarrow$);
\item   With the increase of the parameter $s$, both the iteration number and
the CPU time decrease firstly and then increase(denoted by $^\uparrow_\downarrow$).
\end{itemize}
These   changing trends   identify  with Remark \ref{agaga}. Reported results of Table 1 indicate that the choice of the parameters $(\sigma,\rho,s)$
could have a great effect  on the performance of P-PPA,  the value in the place of the arrow is   better than others in each subtable, and it seems that setting
$(\sigma,\rho,s)=(0.8,6,3)$ would be a reasonable choice for solving the
test problem (\ref{test-P}).

\begin{remark} \label{rem1}
{\color{blue}Noting from Table 1 that   if any four parameters are fixed, then by (\ref{Sec3-para-region}) the remaining  one is clearly subjected to a given domain. For instance, in the top subtable of Table 1 we have from (\ref{Sec3-para-region}) that $\sigma>\frac{37.25}{51}\approx0.73.$
Therefore, we can randomly choose some  values in such region to  do experiments to find
out which one gives relatively better performance.}
\end{remark}

\vskip2mm
\begin{center}
\begin{tabular}{cccccc}
\hline
 Parameters &  Iter  & CPU & IRE  &  DRN  & $\phi(u^k)$  \\
\hline
$\sigma(\rho=6,s=3)$\\ \hline
 0.8 & $\textbf{134}_\downarrow$ &$\textbf{3.60}_\downarrow $&9.8125e-7 &1.0932e-5&19.2402\\
 1 & 137 &  3.70  &  9.3824e-7  & 1.0508e-5 &  19.2402   \\
 2 &  149 & 4.00    &   9.3133e-7  &  1.0673e-5 &19.2402      \\
 4 & 174 & 4.49    &   8.5235e-7  &  1.0120e-5 &  19.2402    \\
 6 & 199 & 5.08    &   7.8227e-7  &  9.5316e-6 &  19.2402    \\
 8 & 223 &5.55     &   7.5285e-7  &  9.3548e-6 &  19.2402    \\
 10 & 248 &  6.25   &   6.9449e-7  &  8.7643e-6 & 19.2402     \\
\hline
$\rho(\sigma=6,s=3)$\\ \hline
 0.8 & $\textbf{137}_\downarrow$ &$\textbf{3.61}_\downarrow $&6.2401e-7 &8.0288e-6&19.2402\\
 1 & 140     & 3.70     &  6.0813e-7    &   7.8031e-6   &  19.2402   \\
 2 & 152 &   3.94   &  6.5624e-7    &   8.3108e-6   &  19.2402   \\
 4 &  176   & 4.56     &  7.1727e-7    &   8.8891e-6   &   19.2402  \\
 7 & 210 & 5.22     &  8.2280e-7    &   9.9542e-6   &  19.2402   \\
 8 & 222 & 5.47     &  8.2070e-7    &   9.8666e-6   &  19.2402   \\
 11 & 255 & 6.22     &  8.9283e-7    &   1.0567e-5   &  19.2402   \\
\hline
$s(\sigma=6,\rho=6)$\\ \hline
 1 &  263&     6.48   &   9.7027e-7   &   1.4122e-5   &    19.2402  \\
 2 &  211& 5.25  &   9.8606e-7   &   1.3624e-5   & 19.2402     \\
 5 &  197 & 4.99  & 4.3585e-7   & 3.8039e-6   & 19.2402 \\
 7 &  192 & 4.84  & 3.9519e-7   & 2.4871e-6   & 19.2402 \\
  11 & $\textbf{181}_\downarrow^\uparrow$   & $\textbf{4.67}_\downarrow^\uparrow$  &   9.9008e-7    &   2.9591e-6   &  19.2402    \\
   12 & 195   & 4.86       &  8.9998e-7   &   2.8168e-6   & 19.2402     \\
  14 & 221   & 5.47       &   9.6407e-7   &   3.3572e-6   & 19.2402     \\
  16 & 255    & 6.39 &    8.7023e-7   &    2.4315e-6   & 19.2402     \\
  18 &  282 &  6.92 &    8.5695e-7   &    2.7583e-6   & 19.2402     \\
\hline
\end{tabular}
\end{center}
\begin{center}\small
Table 1:\ Results of problem (\ref{test-P})  by  P-PPA with different parameters $(\sigma,\rho,s)$.
\end{center}

\begin{remark}
{\color{blue} In a similar  way as  mentioned in Remark \ref{rem1}, we have
\[
\tau^2<\frac{(\sigma s-1)(\rho s-1)}{\varepsilon^2}\quad \textrm{and} \quad \varepsilon^2<\frac{(\sigma s-1)(\rho s-1)}{\tau^2}.
\]
Then, by testing some values of the parameter $\tau(\varepsilon)$ and observing which one performs approximately better, our tuned results are $(\tau,\varepsilon)=(3,1.5).$ Hence, for  further comparative experiments with some state-of-the-art methods, we would use
the tuned results $(0.8,6,3,3,1.5)$ as the default parameter setting for both P-PPA and RP-PPA.}
\end{remark}

\subsection{Comparative experiments}
Now, we would like to compare P-PPA and RP-PPA with  other two popular  methods
for solving the  problem (\ref{test-P}):
ADMM\footnote
{Available at http://web.stanford.edu/$\sim$boyd/papers/admm/.} \cite{BoydChu2010}
 and R-PPA \cite{GuHeYuan2014}.

\begin{center}
\begin{tabular}{ccccc}
\hline
P-PPA$(l, n)$&  Iter  & CPU & IRE  & $\phi(u^k)$  \\
\hline
(1000,4000)  & 313& 4.01   &  9.7448e-11  & 19.3556  \\
(1800,4000)  &  265 & 6.27     &  9.5478e-11  &  19.2402 \\
(1000,10000) & 387 & 11.25 &  9.6822e-11  &19.0072 \\
(1800,10000) & 260 & 13.16  &  9.6308e-11  & 20.0529 \\
(1000,16000) & \textbf{279} &\textbf{11.71} &  9.8237e-11  &  18.6698  \\
(1600,16000) &  247 & 17.24  &  9.5727e-11  & 19.7810\\
(1000,20000) & \textbf{316} &\textbf{16.43}  &  9.2613e-11  & 19.9294 \\
(1800,20000) & \textbf{196}   & \textbf{19.80}   & 8.9340e-11  & 19.2038 \\
(1000,24000) & \textbf{369}& \textbf{22.62} &  9.8638e-11  & 18.5250 \\
(2000,26000) & \textbf{212} &  \textbf{29.26} &  9.4504e-11  & 19.0005  \\
\hline
RP-PPA$(l, n)$&  Iter  & CPU & IRE  & $\phi(u^k)$  \\
\hline
(1000,4000)  & 260 & 3.28    &   9.3533e-11  &  19.3556  \\
(1800,4000)  & 219 & 5.20    &   9.7552e-11  & 19.2402   \\
(1000,10000) & 322 &  9.45  &   9.4616e-11  & 19.0072   \\
(1800,10000) & 216 &  11.09  &   9.2376e-11  &   20.0529 \\
(1000,16000) & \textbf{232} &  \textbf{10.00}   &   9.2286e-11  &   18.6698 \\
(1600,16000) &  206&   14.49  &   9.5818e-11  &  19.7810  \\
(1000,20000) & \textbf{259} &  \textbf{13.70}   &   9.3551e-11  &  19.9294  \\
(1800,20000) &  \textbf{173}&  \textbf{17.58}   &   9.1322e-11  & 19.2038   \\
(1000,24000) &  \textbf{302}&   \textbf{18.64}  &   9.6840e-11  &   18.5250  \\
(2000,26000) & \textbf{174} &   \textbf{24.40} &   9.5930e-11  &   19.0005 \\
\hline
R-PPA$(l, n)$ &  Iter  & CPU & IRE &$\phi(u^k)$  \\
\hline
(1000,4000)  &  284 & 3.67  &   9.7350e-11  &  19.3556  \\
(1800,4000)  & 238 & 5.68 &  9.3094e-11  & 19.2402 \\
(1000,10000) &371 & 10.85  &  9.9245e-11  & 19.0072 \\
(1800,10000) & 243  & 12.27  &  9.5912e-11  & 20.0529  \\
(1000,16000) & 336& 13.99  &  9.8769e-11  &  18.6698     \\
(1600,16000) & 256 & 17.74 &  9.5437e-11  &  19.7810 \\
(1000,20000) & 339 &17.47 & 9.3942e-11  & 19.9294 \\
(1800,20000) & 236 & 22.80 &9.9142e-11  &19.2038 \\
(1000,24000) & 374&22.92 &  9.7664e-11  & 18.5250 \\
(2000,26000) & 226  & 31.13   &  9.7351e-11  & 19.0005  \\
\hline
ADMM$(l, n)$&  Iter  & CPU & IRE  &$\phi(u^k)$  \\
\hline
(1000,4000)  &\textbf{100} & \textbf{1.36}  &   9.2692e-11  & 19.3556   \\
(1800,4000)  & \textbf{71} & \textbf{2.16} &  9.2988e-11  &  19.2402 \\
(1000,10000) & \textbf{189} &\textbf{5.62}  &  9.1517e-11  & 19.0072\\
(1800,10000) & \textbf{126} &\textbf{6.80}   &  8.9039e-11  & 20.0529 \\
(1000,16000) & 285 & 11.95 &  9.8136e-11  &  18.6698     \\
(1600,16000) & \textbf{193} &  \textbf{13.69} &  9.2166e-11  & 19.7810 \\
(1000,20000) & 351 &18.04  &  9.1173e-11  & 19.9294 \\
(1800,20000) &208 & 20.33 &9.9380e-11  & 19.2038\\

(1000,24000) & 426 & 26.06 &  9.5935e-11  &18.5250  \\
(2000,26000) & 236  & 32.28   &  9.7005e-11  & 19.0005  \\
\hline
\end{tabular}
\end{center}
\begin{center}\small
Table 2:\ Comparative results of problem (\ref{test-P}) with different dimensions\footnote{The bold value excluding that of P-PPA is of the smallest in each experiment with respect to (l,n), and the bold value of P-PPA is smaller than that of R-PPA and ADMM.}.
\end{center}

Table 2 reports the numerical results of all comparison methods for
solving the problem (\ref{test-P}) with different dimensions $(l,n)$.
 Actually, ADMM  uses   the downloaded codes but with  penalty parameter 1 and a widely used step-length 1.618 when updating the lagrangian multipliers.
For R-PPA, the penalty parameter is set as $10$, which is reasonably
good for this method. Both RP-PPA and R-PPA use the same relaxation factor
$\gamma = 1.2$. The tolerance $\textrm{Tol}=1.0\times10^{-10}$ is
used for all the testing problems in Table 2.
The numerical results of solving the test problem with fixed dimensions
 $(l,n)=(1800,20000)$, but under different accurate tolerances, are presented in Table 3.
 Moreover,  the comparative convergence curves of the objective function $\phi(u^k)$, the residual error IRE(k)
and the dual residual norm $\|y^k-y^{k-1}\|$ against the number of iterations
are shown in Fig. 1.

\begin{center}
\begin{tabular}{ccccc}
\hline
$\textrm{Tol}=10^{-5}$& P-PPA  & RP-PPA  & R-PPA  &ADMM  \\
\hline
Iter &     100  &\textbf{86}  &  102   & 88 \\
CPU  &  10.53  & \textbf{9.31}& 10.68   & 9.54 \\
IRE  & 9.9112e-6 & 9.0542e-6& 9.5777e-6  & 9.2727e-6 \\
$\phi(u^k)$& 19.2038   &19.2038 &  19.2038      &  19.2037 \\
\hline
$\textrm{Tol}=10^{-8}$& P-PPA & RP-PPA   & R-PPA  &ADMM  \\
\hline
Iter & 159 & \textbf{137} &    182   &  158  \\
CPU   & 16.80  &\textbf{14.27} &  18.64   & 16.65 \\
IRE  &   8.9178e-9 & 9.7009e-9&   9.5103e-9     &  9.3515e-9 \\
$\phi(u^k)$& 19.2038  & 19.2038 & 19.2038       & 19.2038  \\
\hline
$\textrm{Tol}=10^{-11}$& P-PPA & RP-PPA   & R-PPA  &ADMM  \\
\hline
Iter &  \textbf{214} & \textbf{190} &  264     & 234  \\
CPU   & \textbf{21.72} &\textbf{18.48} & 27.03 & 24.08   \\
IRE  &  9.2468e-12  & 9.8367e-12&   9.5013e-12    & 9.5918e-12  \\
$\phi(u^k)$& 19.2038  & 19.2038 & 19.2038       & 19.2038  \\
 \hline
 $\textrm{Tol}=10^{-14}$& P-PPA  & RP-PPA  & R-PPA  &ADMM  \\
\hline
Iter &  \textbf{274} & \textbf{244} &  347   &  2000 \\
CPU   & \textbf{26.79} &\textbf{24.18}  & 32.24   &192.33   \\
IRE  &  9.9348e-15  & 8.9322e-15 &9.7695e-15      & 1.3963e-14  \\
$\phi(u^k)$& 19.2038   &19.2038 &  19.2038      & 19.2038  \\
\hline
\end{tabular}
\end{center}
\begin{center}\small
Table 3:\ Comparative results of problem (\ref{test-P}) under different tolerance errors\footnote{The bold value of P-PPA is smaller than that of R-PPA and ADMM, and the bold value of RP-PPA is of the smallest.}.
\end{center}

\begin{figure}[htbp]
 \begin{minipage}{1\textwidth}
 \def\figurename{\footnotesize Fig.}
 \centering
\resizebox{12.5cm}{13cm}{\includegraphics{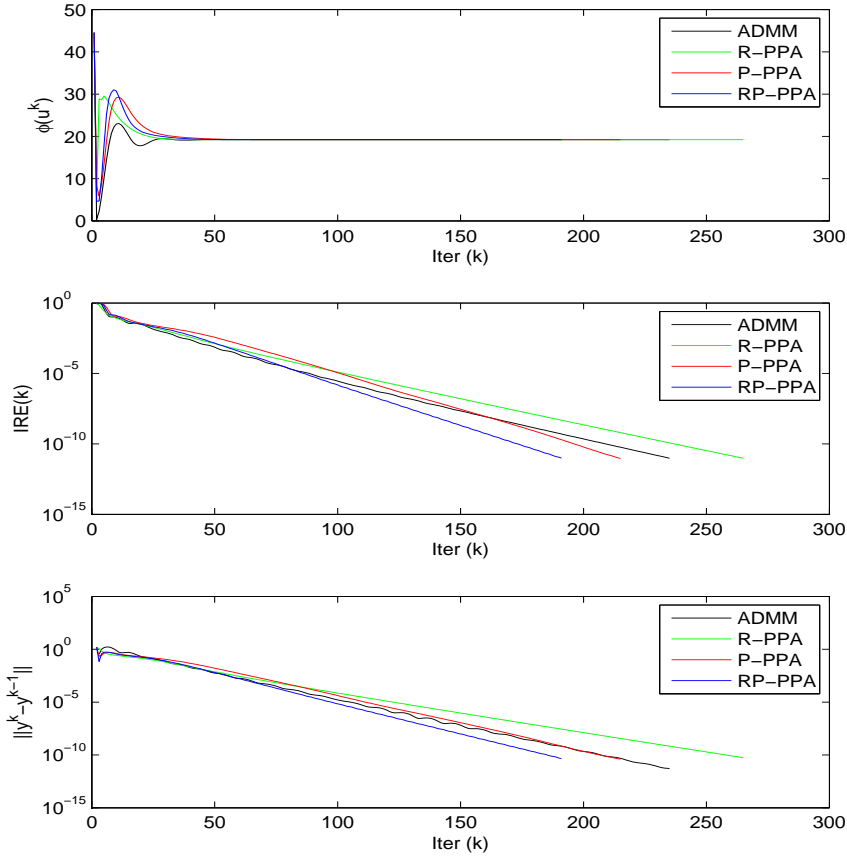}}
\caption{\footnotesize  Comparative convergence curves of the objective function
$\phi(u^k)$, the residual  IRE(k) and the dual
residual norm $\|y^k-y^{k-1}\|$ against the number of iterations under tolerance} $\textrm{Tol}=1.0\times10^{-11}$.
   \end{minipage}
\end{figure}

We can observe from Tables 2-3 that both P-PPA and RP-PPA  perform
better than ADMM and R-PPA  for the relatively large size problems
 in terms of both the number of iterations and the CPU time. {\color{blue} Another outstanding observation is that RP-PPA can  clearly shorten  the number of iterations and the CPU time of P-PPA.  Besides, ADMM performs
 the worst for  large-size problems, while it performs the best for small-size problems (e.g. $n=4000$). Both Table 3 and Fig. 1  illustrate that
as the tolerance becomes smaller,  P-PPA and RP-PPA could perform significantly
better than  ADMM and R-PPA.} In addition, note from Table 3 that ADMM fails to solve the problem
with dimensions $(l,n)=(1800,20000)$ in $2000$ iterations to achieve the accuracy $\textrm{Tol}=1.0\times10^{-14}$. {\color{blue}Reported results show that
our  proposed methods are  efficient
when properly choosing the algorithmic parameters.}

\begin{remark} {\color{blue} Although the tuned values of the parameters  in the proposed algorithms are not proved theoretically to be the best, the reported numerical results of  comparative experiments are sufficient to show that such a choice can make our algorithms outperform the other two algorithms.}
\end{remark}

\section{Conclusion and discussion}
By introducing several parameters to the proximal matrix in the framework of the traditional
proximal point algorithm, we propose a new Parameterized Proximal Point Algorithm
(P-PPA) for solving the separable convex minimization problem.
Under certain conditions on these parameters, we show that the P-PPA is globally
convergent and would maintain a  worst-case $O(1/t)$ ergodic  convergence rate.
By properly choosing the parameters, the numerical experiments of solving the classical
\emph{lasso} problem in statistical learning indicate our P-PPA and the Relaxed
P-PPA (RP-PPA) could perform significantly better than the other two
benchmark methods: ADMM and R-PPA, especially
for solving large scale problems and high accurate solutions are required.

{\color{blue} For the case that the subproblems are not easy to solve,  inexact ADMMs\cite{HagerZhang2016} are recently developed for solving the general separable convex optimization problems with a linear constraint and with an objective including  smooth plus nonsmooth terms, which  is particularly useful when the ADMM's subproblems do not have closed solutions or when the solution of the subproblem is expensive.
Also, there are  other works in references \cite{He2013,Solodov2000}
which discuss how to solve the subproblems inexactly.}

Finally, observe that the P-PPA and RP-PPA can be naturally extended to
 to solve the problem (\ref{Sec1-Prob}) with inequality constraints
or with matrix variables such as
\begin{equation} \label{matrix-P}
\begin{array}{lll}
\min  &  f(X)+g(Y)\\
\textrm{s.t. } &   AX+BY=C,\\
     &   X\in \mathcal{X}, Y\in \mathcal{Y},
\end{array}
\end{equation}
where both $f$ and $g$ are proper  closed convex functions over the matrix variables
$X$ and $Y$, $A$ and $B$ are coefficient matrices, $\mathcal{X}$ and
$\mathcal{Y}$ are certain closed convex sets. The model problem (\ref{matrix-P})
also arises very often in many important applications in data analysis \cite{CandWright2011,LiuLiBaiLiu2017},
for example, the robust principal component analysis in image processing, etc.
One of the following research tasks could be to extend the P-PPA to solve the
multi-block separable convex/nonconvex programming problems.

\vskip 3mm \noindent{\large\bf Acknowledgements}\vskip 2mm

The authors wish to thank  the
Editor-in-Chief Prof. O.A. Prokopyev  and the  anonymous referees  for providing their valuable suggestions which have significantly improved
the quality of the paper.



\end{document}